\newcommand{\R}{{\mathbb R}}
\newtheorem{theorem}{Theorem}
\title{\LARGE \bf
Frequency Domain Properties and Fundamental Limits of Buffer-Feedback Regulation in Biochemical Systems}
\author{Edward J. Hancock and Jordan Ang
\thanks{Edward J. Hancock is with the School of Mathematics and Statistics \& The Charles Perkins Centre,
        The University of Sydney, NSW, 2006, Australia
        {\tt\small edward.hancock@sydney.edu.au}}%
\thanks{Jordan Ang is with the Department of Chemical and Physical Sciences, The University of Toronto, Mississauga,
ON L5L1C6, Canada and the Department of Immunology, The University of Toronto,
Toronto, ON, M5S1A8, Canada {\tt\small j.ang@utoronto.ca}}%
\thanks{Part of the work in this paper was presented at the 2017 IEEE Conference on Decision and Control on Dec 12-15, 2017, in Melbourne, Australia.}
}
\begin{document}

\maketitle
\thispagestyle{empty}
\pagestyle{empty}
\begin{abstract}

Feedback regulation in biochemical systems is fundamental to homeostasis, with failure causing disease or death. Recent work has found that cooperation between feedback and buffering---the use of reservoirs of molecules to maintain molecular concentrations---is often critical for biochemical regulation, and that buffering can act as a derivative or lead controller. However, buffering differs from derivative feedback in important ways: it is not typically limited by stability constraints on the parallel feedback loop, for some signals it acts instead as a low-pass filter, and it can change the location of disturbances in the closed-loop system. Here, we propose a frequency-domain framework for studying the regulatory properties of buffer-feedback systems. We determine standard single-output closed-loop transfer functions and discuss loop-shaping properties. We also derive novel fundamental limits for buffer-feedback regulation, which show that buffering and removal processes can reduce the fundamental limits on feedback regulation. We apply the framework to study the regulation for glycolysis (anaerobic metabolism) with creatine phosphate buffering. 
\end{abstract}

\section{INTRODUCTION}

Discovering the universal principles of biological robustness is important for understanding the principles of life, treating complex diseases, and creating \textit{de novo} synthetic biology \cite{KIT04}. A ubiquitous mechanism for robustness in biology is feedback regulation, which has received extensive study \cite{ALO07,CHABD11,IGLI09,KIT04,SAU17,YIHSD00}. However, universal theories of robustness need to be extended beyond feedback regulation \cite{KIT04}. Buffering---the use of reservoirs of molecules to maintain molecular concentrations---is another important and ubiquitous mechanism for robust regulation in biology \cite{HANAPS17,SHE13}. In contrast to one another, feedback acts via a biological actuator (e.g., a regulated enzyme) \cite{HANAPS17,SHE13}, while buffering, as we consider it in this paper, is a non-actuated form of regulation occurring via Le Chatelier-driven reactions. Examples include pH buffering, ATP energy buffering via creatine phosphate, reservoirs established through cell compartmentalization and other spatial barriers \cite{BERS10,HANAPS17,SHE13}. Other forms of buffering include small synthetic biology networks designed to increase component modularity \cite{VECNS08}. Outside of biology, combined regulation through feedback and buffering tanks is common in industrial chemical processes \cite{FAAS03}.

In recent work, we have found that cooperation between feedback and buffering mechanisms are often critical for biochemical regulation, and that buffering can act as a derivative or lead controller \cite{HANAPS17}. However, buffering differs from derivative feedback: it is not typically limited by any stability constraints on the parallel feedback loop \cite{HANAPS17}, such as from delays or autocatalysis (where the system's output is necessary to catalyze its own production - see \cite{CHABD11,SIAEA17}). Further, for some signals it acts instead as a low-pass filter, closer to a two degrees-of-freedom controller, and it can change the location of disturbances in the closed-loop system. As a result, the analysis requires a modified approach from standard control theory in order to take the structure of buffer-feedback regulation into account. Due to the regulatory constraints arising from autocatalytic networks in biology \cite{BUZD10, CHABD11,MOTEA10, SIAEA17}, it is important for this modified approach to incorporate fundamental limits on feedback regulation. 

In this paper, we study frequency-domain properties of buffer-feedback regulation. We create a general framework to extend our recent work from minimal models \cite{HANAPS17} to higher-order single-output models. We determine standard closed-loop transfer functions for buffer-feedback regulatory structures and describe their loop-shaping properties.  We also determine novel fundamental limits on buffer-feedback regulation that are functions of separate open-loop contributions from feedback and buffering. These limits show that buffering and removal processes can reduce fundamental constraints on feedback regulation. Finally, we apply the developed framework to the analysis of glycolysis (anaerobic metabolism) with creatine phosphate buffering: a buffered, autocatalytic biochemical system. 

The structure of the paper is as follows: Section \ref{sect:pf} introduces the problem formulation, Section \ref{sect:tf} determines the closed-loop transfer functions, Section \ref{sect:loop} discusses loop-shaping properties of buffer-feedback systems, Section \ref{sect:constraint} determines fundamental constraints on the closed loop systems, and Section \ref{sect:exam} applies the framework to the example of glycolysis.

\section{Problem Formulation}\label{sect:pf}

In this section, we propose a general model for buffer-feedback regulatory systems. We first propose an input-output model, and then include buffering and feedback. An example of the final model is diagrammed in Figure~\ref{fig:top}. 

\subsection{Input-Output Model}

We consider the single-output process model
\begin{equation}\label{eq:min_mod}
\begin{aligned}
	\dot{y}&=p_y(y,z,u_h)-r_y(y,z,u_h)+u_b+d_y(t)\\
	\dot{z}&=p_z(y,z,u_h)-r_z(y,z,u_h)+d_z(t)\\
\end{aligned}
\end{equation}
where $y:\R\to\R$ is the regulated species concentration, $z:\R\to\R^{n-1}$ contains $n-1$ intermediate species concentrations, $u_h:\R\to\R$ is the feedback control input, $u_b:\R\to\R$ is the buffering control input,  $p_y:\R^n\times\R\to\R$ and $p_z:\R^n\times\R\to\R^{n-1}$ are the production rates of $y$ and $z$ respectively, $r_y:\R^n\times\R\to\R$ and $r_z:\R^n\times\R\to\R^{n-1}$ are the removal rates at $y$ and $z$ respectively, and $d_y:\R\to\R$ and $d_z:\R\to\R$ are the disturbances at $y$ and $z$ respectively.

We are interested in studying the behaviour of the regulatory system about a set point $\bar{y}$. For example, we could have $\bar{y}=3mM$ ATP for energy regulation. For the nominal case ($d_y=d_z=0$), we therefore assume that the steady-state inputs $u_h=\bar{u}_h(\bar{y})$ and $u_b=\bar{u}_b(\bar{y})$ are a function of $\bar{y}$, and that for \eqref{eq:min_mod} there is a nominal closed-loop steady state $(\bar{y},\bar{z})$ of interest. 

\begin{figure}
\centering
    \includegraphics[width=0.75\columnwidth]{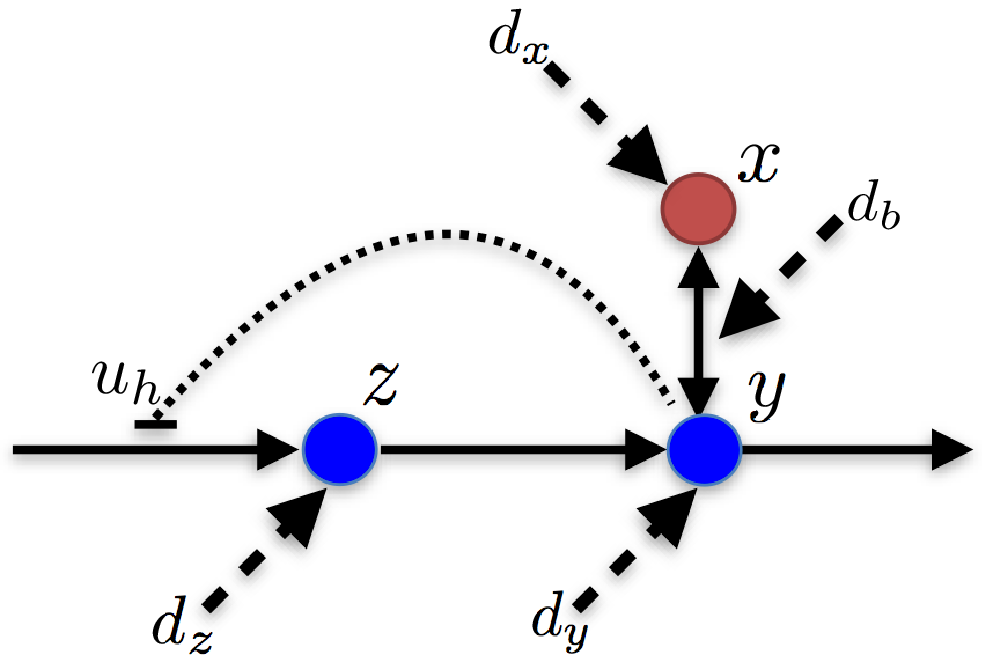}
\caption{An Example of a Buffer-Feedback Regulatory System, where $y$ represents the regulated species and $x$ represents the buffering species.}
\label{fig:top}
\end{figure}

We define the deviations about the nominal steady state as
\begin{equation*}
\begin{aligned}
	\Delta y =y-\bar{y},\quad \Delta z =z-\bar z,\quad \Delta u_h =u_h-\bar{u}_h,\quad \Delta u_b =u_b-\bar{u}_b\\
\end{aligned}
\end{equation*}
Linearizing \eqref{eq:min_mod}, we obtain
\begin{equation}\label{eq:min_mod_ol}
\begin{aligned}
\Delta\dot{y}&=A_{yy}\Delta y+A_{yz}\Delta z+B_{yh}\Delta u_h+\Delta u_b+d_y\\
\Delta\dot{z}&=A_{zy}\Delta y+A_{zz}\Delta z+B_{zh}\Delta u_h+d_z\\
\end{aligned}
\end{equation}
where 
\begin{equation*}
\begin{aligned}
A_{yy}&=\frac{\partial}{\partial y}(p_y-r_y),&A_{yz}&=\frac{\partial}{\partial z}(p_y-r_y),\\
A_{zy}&=\frac{\partial}{\partial y}(p_z-r_z),&A_{zz}&=\frac{\partial}{\partial z}(p_z-r_z),\\
B_{yh}&=\frac{\partial}{\partial u_h}(p_y-r_y),&B_{zh}&=\frac{\partial}{\partial u_h}(p_z-r_z)
\end{aligned}
\end{equation*}
all of which are evaluated at the nominal steady state $(\bar{y},\bar{z})$ and nominal input $(\bar{u}_h,\bar{u}_b)$ for the open-loop system. 

\subsection{Process Model with Buffering}

We next include a buffering system, expanding \eqref{eq:min_mod} to be
\begin{equation}\label{eq:min_mod_buff}
\begin{aligned}
	\dot{y}=&p_y(z,u_h)-r_y(y,u_h)+d_y(t)\mathbf{-g_y(y,x)+g_x(y,x)-d_b(t)}\\
	\dot{z}=&p_z(y,z,u_h)-r_z(y,z,u_h)+d_z(t)\\
	\mathbf{\dot{x}}=&\mathbf{g_y(y,x)-g_x(y,x)-r_x(x)+d_x(t)+d_b(t)}
\end{aligned}
\end{equation}
where $x$ is the buffering species concentration, $r_x:\R\to\R$ is the removal rate of $x$, $d_x:\R\to\R$ represents a disturbance at the buffering species $x$, the (lumped) buffering reactions are $g_y:\R^n\to\R$ and $g_x:\R^n\to\R$ with $g_y$ representing $y\!\rightarrow\!x$ and $g_x$ representing $x\!\rightarrow\!y$, and $d_b:\R\to\R$ represents a disturbance in the buffering reactions $g_y$ and $g_x$. The additional terms to those in \eqref{eq:min_mod} are highlighted in bold.

We define the case $r_x=0$ as lossless buffering and the case $r_x\ne 0$ as dissipative buffering or buffering with a removal process. Consistent with the no buffering case, we assume that for \eqref{eq:min_mod} there is a nominal ($d_y=d_z=d_b=d_x=0$) steady state $(\bar{y},\bar{z},\bar{x})$ of interest at the nominal feedback input $u_h=\bar{u}_h(\bar{y})$ such that
\begin{equation*}
\begin{aligned}
&\bar{u}_b=-g_y(\bar y,\bar x)+g_x(\bar y,\bar x)\\
&g_y(\bar y,\bar x)=g_x(\bar y,\bar x)+r_x(\bar x)
\end{aligned}
\end{equation*}
where $\bar{u}_b=0$ for the lossless case.

We define the deviation for the buffering species
\begin{equation*}
\begin{aligned}
	\Delta x =x-\bar x\\
\end{aligned}
\end{equation*}
and linearize \eqref{eq:min_mod_buff} to obtain
\begin{equation}\label{eq:min_mod_lin}
\begin{aligned}
\Delta\dot{y}&=A_{yy}\Delta y+A_{yz}\Delta z-\sigma_y \Delta y +\sigma_x\Delta x+B_{yh}\Delta u_h+d_y-d_b\\
\Delta\dot{z}&=A_{zy}\Delta y+A_{zz}\Delta z+B_{zh}\Delta u_h+d_z\\
\Delta\dot{x}&=\sigma_y \Delta y -(\sigma_x+a_{xx} )\Delta x+d_x+d_b
\end{aligned}
\end{equation}
where 
\begin{equation*}
\begin{aligned}
\sigma_y&=\frac{\partial }{\partial y}(g_y-g_x),&\sigma_x&=\frac{\partial }{\partial x}(g_x-g_y),&a_{xx}&=\frac{\partial r_x}{\partial x}
\end{aligned}
\end{equation*}
which are evaluated at the nominal steady state $(\bar{y},\bar{z},\bar{x})$. 

\section{Transfer Functions}\label{sect:tf}

In this section, we determine a set of transfer functions between the system variables. 

\subsection{Process Model}

\begin{figure}
\centering
    \includegraphics[width=\columnwidth]{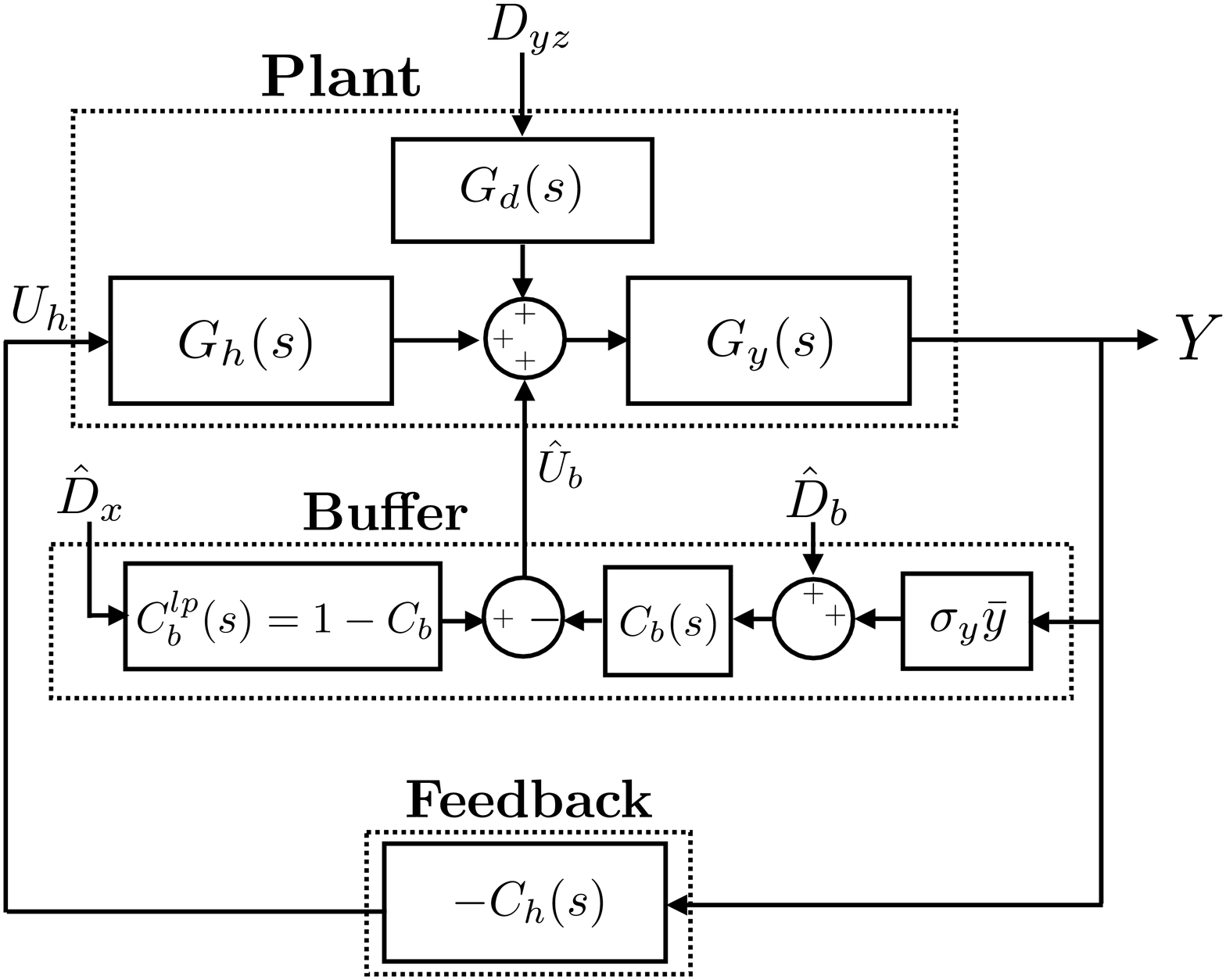}
\caption{Block Diagram of Buffer-Feedback Regulation described by Equation \eqref{eq:sf}. To simplify the layout, we use input $\hat{U}_b=\bar{y}U_b$ and disturbances $\hat{D}_b=\hat{d}_bD_b$, $\hat{D}_x=\hat{d}_xD_x$.}
\label{fig:BDPB}
\end{figure}

The frequency-domain representation of the process model in
\eqref{eq:min_mod_ol} is (see Appendix)
\begin{equation}\label{eq:ol_tf}
\begin{aligned}
Y(s)=&\frac{\hat{p}}{\bar{y}}\hat G_y(s)\hat G_h(s)U_h(s)+\frac{\hat{g}}{\bar{y}}\hat G_y(s)U_b(s)\\
&+\hat G_y(s)\hat G_d(s)\frac{\hat d_{yz}}{\bar{y}}D_{yz}(s)\\
\end{aligned}
\end{equation}
where 
\begin{equation*}
\begin{aligned}
&Y(s)=\frac{1}{\bar{y}}{\mathcal{L}\{\Delta y(t)\}},\quad \hat d_{yz}=\text{Diag}({\hat{d}_y,\hat{d}_z})\\
& D_{yz}(s)=\left(\frac{1}{\hat d_y}\mathcal{L}\{d_y(t)\}, \frac{1}{\hat d_z}\mathcal{L}\{d_z(t)\}\right)^T\\
\end{aligned}
\end{equation*}
are the scaled output and disturbances in the frequency-domain, $\mathcal{L}(\cdot)$ is the Laplace transform,  $s$ is the complex frequency,  
\begin{equation*}
\begin{aligned}
&U_h(s)=\frac{1}{\hat{p}}\mathcal{L}\{\Delta u_h(t)\},\quad &U_b(s)&=\frac{1}{\hat{g}}\mathcal{L}\{\Delta u_b(t)\}
\end{aligned}
\end{equation*}
are the Laplace transforms of the scaled feedback and buffer inputs respectively, and
\begin{equation*}
\begin{aligned}
\hat G_y (s)&=(s-A_{yy}-\hat G_zA_{zy})^{-1},&\hat G_h(s)&=B_{yh}+\hat G_zB_{zh}\\
\hat G_z (s)&= A_{yz}(sI_{n-1}-A_{zz})^{-1},&\hat G_d(s)&=[1,\, \hat G_z]\\
\end{aligned}
\end{equation*}
We initially keep the scaling factors separate from the transfer functions for two reasons: First, the relation to existing sensitivity analysis used in biological modelling \cite{KLIEA09} is more explicit. Second, the disturbance size may not be independent of feedback and buffering, e.g., due to molecular noise \cite{HANAPS17}.

\subsection{Closed Loop (Buffering and Feedback)}

The transfer function model for the closed-loop system \eqref{eq:min_mod_lin} is (see Appendix)
\begin{equation}\label{eq:cl_tf_par}
\begin{aligned}
Y=&\frac{\hat d_{yz}}{\bar{y}}S\hat G_y\hat G_dD_{yz}+\frac{\hat d_x}{\bar{y}}S\hat G_y(1-\hat C_b)D_x-\frac{\hat d_b}{\bar{y}}S\hat G_y\hat C_bD_b\\
S=&\frac{1}{1+\hat G_y\hat C_b\sigma_y+\hat G_y\hat G_h\hat C_h}
\end{aligned} 
\end{equation}
where $S(s)$ is the classical sensitivity function \cite{SKOP05}, the unscaled feedback is $\hat{p}U_h=-\hat C_h(s)\bar{y}Y$, and
\begin{equation*}
\begin{aligned}
\hat C_b(s)&= \frac{s+a_{xx}}{s+\sigma_x+a_{xx}}\\
D_x(s)&=\frac{1}{\hat d_x}\mathcal{L}\{d_x(t)\}, \quad D_b(s)=\frac{1}{\hat d_b}\mathcal{L}\{d_b(t)\}\\
\end{aligned}
\end{equation*}

\subsection{Scaled Transfer Functions}

The scaled transfer function model for the closed-loop system \eqref{eq:min_mod_lin}, as represented in Figure \ref{fig:BDPB}, is
\begin{equation}\label{eq:sf}
\begin{aligned}
Y&=S G_y  G_d D_{yz}+S G_y(1-C_b)\hat d_xD_x-S G_yC_b\hat d_bD_b\\
S&=\frac{1}{1+ G_yC_b \sigma_y\bar{y}+ G_yG_hC_h}\\
\end{aligned} 
\end{equation}
where 
\begin{equation*}
\begin{aligned}
G_y&=\frac{1}{\bar{y}}\hat G_y,\quad G_h= \hat G_h\hat{p},\quad  G_d= \hat{G}_d\hat d_{yz},\\
 C_h&=\frac{\bar{y}}{\hat{p}}\hat C_h,\quad C_b =\frac{\bar{y}}{\hat{g}}\hat C_b=\hat C_b
\end{aligned} 
\end{equation*}
where we set $\hat{g}=\bar{y}$ as $x$ in \eqref{eq:min_mod_buff} can be scaled by $\bar{y}$ (see Appendix). The corresponding loop transfer function $L(s)$ \cite{SKOP05} is
\begin{equation}\label{eq:ltf}
\begin{aligned}
L(s)&=L_b(s)+L_h(s) \\
L_b(s)&= G_yC_b \sigma_y\bar{y},\quad L_h(s)= G_yG_hC_h\\
S(s)&=\frac{1}{1+L(s)}\\
\end{aligned}
\end{equation}

\section{Loop-shaping Properties}\label{sect:loop}

In this section, we discuss the loop-shaping concepts for feedback-buffering regulatory systems, which generalizes \cite{HANAPS17} to a higher-order setting. These include the lead/derivative controller properties of buffers, the stabilizing effect of buffering, and the low-pass filtering property of buffering. We highlight both similarities and differences with traditional control structures.

\subsection{Buffers Act as Derivative or Lead Controllers}

We first show that that buffers can act as derivative or lead controllers. For the nominal case, the buffering is equivalent to feedback of the form $U_b=- C_b\sigma_yY$, where
\begin{equation}\label{eq:buff}
C_b\sigma_y=\sigma_y\frac{s+a_{xx}}{s+\sigma_x+a_{xx}}
\end{equation}
It can be noted that \eqref{eq:buff} is the standard form for a lead controller \cite{HANAPS17}. For the case that $a_{xx}=0$ (lossless buffering), we have an equivalence to derivative filtering, where
\begin{equation*}
\begin{aligned}
C_b\sigma_y= \frac{\sigma_y}{\sigma_x}\frac{s}{1+\frac{s}{\sigma_x}}
\end{aligned}
\end{equation*}
For $a_{xx}=0$, the buffering has no effect on the steady-state regulation, which is handled by the parallel feedback loop. Dissipative buffering may regulate the steady state as $C_b(0)\ne 0$. However, the addition of dissipative buffering can require an increase in the steady-state production rate $p_y$; this increase can also increase the size of disturbances $\hat{d}_{yz}$ and so can partially or fully negate the improvement in steady-state regulation.

\subsection{Buffers Can Stabilize Feedback}

We can observe that buffers have the ability to stabilize feedback due to two properties:  First, buffering has a parallel action to feedback and is not typically subject to the non-minimum phase constraints from delays or autocatalysis. For the open-loop transfer function \eqref{eq:ltf}, the RHP zeros and delays will typically be contained in the feedback plant $G_h$. As $\sigma_yC_b$ is independent of $ G_h$, it is also independent of its RHP zeros or delays. Second, a derivative or lead controller is known to help stabilize feedback by providing the controller with phase lead \cite{NIS04}.

\subsection{Buffers Can Also Act as Low-Pass Filters}

We also show that buffers can also act as a barrier between the disturbance and the output. The barrier is in the form of a low-pass filter for disturbances at $d_x$. We can observe this result by noting that
\begin{equation*}
C_b^{lp}=1-C_b=\frac{\sigma_x}{s+\sigma_x+a_{xx}}
\end{equation*}
is a low-pass filter.  For a smaller $\sigma_x+a_{xx}$, the filter has a lower bandwidth. Barrier placement allows buffer-feedback systems to move the location of disturbances between $d_y$, $d_x$, and $d_z$. For example, a disturbance acts at $d_x$ if there is a biological membrane with a transport mechanism (i.e., a barrier) between the regulated species and the disturbance, but would act directly at $d_y$ in the absence of such a barrier.

The combined derivative control and low-pass filtering properties of buffering is similar to a two degrees-of-freedom controller \cite{ASTM08,SKOP05} with an additional constraint $C_b^{lp}+C_b=1$.

\section{Fundamental Limits}\label{sect:constraint} 

In this section, we derive novel fundamental limits for buffer-feedback systems in terms of separate open-loop contributions from the feedback and buffering. We find that buffering and removal processes can reduce fundamental constraints on feedback.

Theorem \ref{th:bode} is an application of previous limits \cite{DOYFT90} that takes into account the structure of buffer-feedback systems. Theorem \ref{th:zero_integral} and \ref{th:zero_peak} are novel modifications to previous results, where zeros of open-loop feedback $L_h$ are studied instead of the loop transfer function $L$.

In the following section we refer to `closed-loop stability', where the system in Figure \ref{fig:BDPB}, with additional disturbances at $U_h,Y$ and state $X$, is internally stable. Also, in Theorem \ref{th:zero_integral} and \ref{th:zero_peak} we assume that $C_h$ has no unstable pole-zero cancellations with $G_h$ to ensure well-posed conditions for open-loop zeros of $L_h$.

\subsection{Bode's Sensitivity Integral}

We first determine the effect of buffering on Bode's sensitivity integral \cite{SKOP05,SERBG97,ASTM08}. We show that Bode's sensitivity integral is reduced by increasing forward buffering kinetic constant $\sigma_y$.
\begin{theorem}\label{th:bode}
Assuming proper, rational $C_h$, strictly proper $G_h$, and closed-loop stability, then Bode's sensitivity integral is
\begin{equation}\label{eq:bode}
\int_0^\infty \ln|S(i\omega)|d\omega = \pi\sum_k Re(p_k)-\frac{\pi}{2}\sigma_y
\end{equation}
where $p_k$ represents the unstable poles of $L(s)= G_yC_b \sigma_y\bar{y}+GC_h$.
\end{theorem}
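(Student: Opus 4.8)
The plan is to deduce the formula from the generalized version of Bode's sensitivity integral \cite{DOYFT90}, in which the loop transfer function is permitted to have relative degree one, and then to identify the correction term this produces. Recall that form of the integral: if $L(s)$ is a proper rational transfer function with relative degree at least one, and the closed loop $S=1/(1+L)$ is stable, then
\begin{equation*}
\int_0^\infty \ln|S(i\omega)|\,d\omega = \pi\sum_k Re(p_k) - \frac{\pi}{2}\lim_{s\to\infty} sL(s),
\end{equation*}
where the $p_k$ are the open right-half-plane poles of $L$ and the limit is finite (and is nonzero exactly when the relative degree equals one). The theorem will then follow once I verify that $L$ in \eqref{eq:ltf} satisfies these hypotheses and that $\lim_{s\to\infty} sL(s) = \sigma_y$.

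Both points can be read off from the structure in \eqref{eq:ltf}. Since $\hat G_z(s) = A_{yz}(sI_{n-1}-A_{zz})^{-1}\to 0$ as $s\to\infty$, the scalar $\hat G_z A_{zy}\to 0$ as well, so $s\hat G_y(s)=s(s-A_{yy}-\hat G_zA_{zy})^{-1}\to 1$; hence $G_y$ has relative degree one with $\lim_{s\to\infty} sG_y(s)=1/\bar y$. Moreover $C_b(s)=(s+a_{xx})/(s+\sigma_x+a_{xx})\to 1$, $C_h$ is proper, and $G_h = \hat p(B_{yh}+\hat G_zB_{zh})$ is strictly proper by hypothesis, so $G_h\to 0$. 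Therefore $L_b = G_yC_b\sigma_y\bar y$ has relative degree one while $L_h = G_yG_hC_h$ has relative degree at least two, and $L = L_b+L_h$ is proper of relative degree one (relative degree two, with vanishing correction, in the degenerate case $\sigma_y = 0$); closed-loop stability is assumed. Taking the limit,
\begin{equation*}
\lim_{s\to\infty} sL(s) = \Big(\lim_{s\to\infty} sG_y(s)\Big)C_b(\infty)\,\sigma_y\bar y + \lim_{s\to\infty} sG_y(s)\,G_h(\infty)\,C_h(\infty) = \tfrac{1}{\bar y}\cdot 1\cdot\sigma_y\bar y + 0 = \sigma_y,
\end{equation*}
the second term vanishing because $sG_y$ and $C_h$ stay bounded while $G_h\to 0$. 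Substituting into the generalized integral gives \eqref{eq:bode}.

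I expect the only delicate part to be the asymptotic bookkeeping at infinity in the multivariable setting — verifying that $\hat G_z$ and $\hat G_z A_{zy}$ genuinely vanish so that $sG_y\to 1/\bar y$, and recognizing that the strict properness assumption on $G_h$ is exactly what removes $L_h$ from the correction term. Conceptually there is little more than this: the buffering branch $L_b$ lowers the relative degree of the loop gain from the usual value of at least two down to one, and the $s^{-1}$ term it introduces has coefficient precisely $\sigma_y$, so the standard Bode integral picks up the extra $-\tfrac{\pi}{2}\sigma_y$. A minor point still to be dispatched is the treatment of any imaginary-axis poles of $L$ (for instance an integrator in $C_h$), which is handled by the usual contour-indentation argument underlying \cite{DOYFT90} and does not change the stated formula.
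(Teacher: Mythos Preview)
Your proposal is correct and matches the paper's proof essentially line for line: the paper also invokes the generalized Bode integral (with the $-\tfrac{\pi}{2}\lim_{s\to\infty} sL(s)$ correction), computes $\lim_{s\to\infty} sG_y\bar y C_b\sigma_y = \sigma_y$ from the relative-degree-one structure of $G_y$ and $C_b(\infty)=1$, and disposes of $L_h$ via the strict properness of $G_h$. Your additional remarks on the degenerate case $\sigma_y=0$ and imaginary-axis poles go slightly beyond what the paper records, but the core argument is identical.
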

\begin{proof}
We first note that $G_y(s)$ has a relative degree of one, which results in 
\begin{equation*}
\begin{aligned}
\lim_{s\to\infty} s G_y\bar{y}C_b \sigma_y&=\lim_{s\to\infty} \frac{1}{\bar{y}}\frac{s}{s-A_{yy}-\hat G_zA_{zy}}\bar{y}\frac{s+a_{xx}}{s+\sigma_x+a_{xx}}\sigma_y\\
&=\sigma_y
\end{aligned}
\end{equation*}
We also have $\lim_{s\to\infty} s G_yG_hC_h\to 0$ as $G_h$ is strictly proper and $C_h$ is proper. Thus
\begin{equation*}
\lim_{s\to\infty}sL(s)= \lim_{s\to\infty}sG_y\bar{y}C_b\sigma_y+\lim_{s\to\infty}sG_y G_hC_h=\sigma_y
\end{equation*}
Finally, as $L(s)$ is strictly proper, we have \cite{SERBG97}
\begin{equation*}
\begin{aligned}
\int_0^\infty \ln|S(i\omega)|d\omega &= \pi\sum_k Re(p_k)-\frac{\pi}{2} \lim_{s\to\infty}sL(s)\\
&=\pi\sum_k Re(p_k)-\frac{\pi}{2}\sigma_y
\end{aligned}
\end{equation*}
\end{proof}
Theorem \ref{th:bode} uses the unstable open-loop poles of $L$. However, we can further assume that all unstable poles in $L$ are poles in $L_h$. In this case, the contributions from the open-loop feedback $L_h$ and buffering $L_b$ separate on the RHS of \eqref{eq:bode}.

Theorem \ref{th:bode} shows that Bode's integral is reduced by the forward buffering rate rather than a ratio between $\sigma_y$ and $\sigma_x$.

\subsection{Weighted Sensitivity Integral}

We next study the integral constraints for the case where the open-loop feedback has an unstable zero, but the open-loop buffering has only stable zeros.
\begin{theorem}\label{th:zero_integral}
Assuming proper, rational $C_h$, and closed-loop stability, then
\begin{equation}\label{eq:zero_integral}
\begin{aligned}
\int_0^\infty \ln|S(i\omega)|w(z,\omega)d\omega =& \pi\ln\prod_k \left|\frac{p_k+z}{p_k-z}\right|-\pi\ln|1+L_b(z)|
\end{aligned}
\end{equation}
where $L_b(s)=G_yC_b \sigma_y\bar{y}$, $z$ represents the single unstable zero of $L_h(s)=GC_h$, $p_k$ represents the unstable poles of $L=L_b+L_h$, $z$ is distinct from all $p_k$, and 
\begin{equation*}
w(z,\omega)= \frac{2z}{z^2+\omega^2}
\end{equation*}
\end{theorem}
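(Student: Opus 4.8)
The plan is to reduce the weighted sensitivity integral in \eqref{eq:zero_integral} to the classical Poisson-type integral formula for the sensitivity function evaluated at a right-half-plane zero, but applied to a modified function rather than to $S$ directly. The key observation is that the weight $w(z,\omega) = 2z/(z^2+\omega^2)$ is exactly the Poisson kernel for the point $z$ on the positive real axis, so an identity of the form $\int_0^\infty \ln|F(i\omega)| w(z,\omega)\, d\omega = \pi \ln|F(z)|$ holds whenever $F$ is analytic and nonzero in the closed RHP and decays suitably at infinity. The standard result (see \cite{SERBG97,DOYFT90}) states that if $z$ is an unstable zero of the loop transfer function, then $\ln|S(z)|$-type quantities are pinned; here we instead have a zero $z$ only of $L_h = GC_h$, not of the full $L = L_b + L_h$, so $S(z) = 1/(1+L(z)) = 1/(1+L_b(z))$ is finite and nonzero, which is why the term $-\pi \ln|1+L_b(z)|$ appears on the right.

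First I would factor out the Blaschke contribution of the unstable closed-loop poles: write $S(s) = B(s) \tilde S(s)$ where $B(s) = \prod_k (s-p_k)/(s+\bar p_k)$ is the Blaschke product carrying the RHP zeros of $S$ (which are the RHP poles $p_k$ of $L$), and $\tilde S$ is analytic and nonzero in the closed RHP. On the imaginary axis $|B(i\omega)| = 1$, so $\ln|S(i\omega)| = \ln|\tilde S(i\omega)|$, and the weighted integral of $\ln|\tilde S(i\omega)|$ against the Poisson kernel equals $\pi \ln|\tilde S(z)| = \pi \ln|S(z)| - \pi\ln|B(z)|$. Next I would evaluate the two pieces: since $z$ is a zero of $L_h$ but distinct from every $p_k$ and $L$ is proper with the appropriate decay, $S(z) = 1/(1+L_b(z))$, giving $\pi\ln|S(z)| = -\pi\ln|1+L_b(z)|$; and $\ln|B(z)| = \sum_k \ln|(z-p_k)/(z+\bar p_k)| = -\sum_k \ln|(p_k+z)/(p_k-z)|$ (using that real parts of $p_k$ are positive so $|z+\bar p_k| = |p_k+z|$ up to the conjugation, which is harmless inside the modulus for the final real-coefficient statement, or handled by pairing complex-conjugate poles). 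Combining gives exactly \eqref{eq:zero_integral}.

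The main obstacle — and the place where care is genuinely needed rather than routine — is justifying the Poisson integral identity itself, i.e., that $\tilde S$ has the right boundary behavior and decay at infinity for the contour-integral argument to close without an extra arc contribution. Because $C_h$ is only assumed proper (not strictly proper) and $L$ is proper, $S$ tends to a nonzero constant $1/(1+L(\infty))$ at infinity rather than to $1$, so $\ln|\tilde S(i\omega)|$ does not decay; one must check that the Poisson kernel $w(z,\omega) \sim 2z/\omega^2$ decays fast enough to make the integral converge and that the semicircular arc in the Cauchy/Poisson contour contributes nothing in the limit. I would handle this by invoking the version of the weighted Bode–Poisson formula already established in \cite{SERBG97} for proper stable minimum-phase factors, after absorbing the high-frequency limit into a constant that integrates against $w$ to a finite value — or, more cleanly, by applying the identity to $S(s)/S(\infty)$, which does tend to $1$ at infinity, and noting $\int_0^\infty w(z,\omega)\,d\omega = \pi$ so the $\ln|S(\infty)|$ offset cancels between both sides. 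The remaining steps — the algebraic evaluation of $S(z)$ and $B(z)$ — are straightforward given the stated hypotheses that $z$ is distinct from all $p_k$ and that $C_h$ has no unstable pole-zero cancellations with $G_h$, which guarantees $z$ is a well-defined zero of $L_h$ and not spuriously cancelled.
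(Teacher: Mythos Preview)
Your proposal is correct and follows essentially the same route as the paper: factor $S$ into an all-pass (Blaschke) part carrying the RHP poles $p_k$ and a minimum-phase part, apply the Poisson-kernel integral identity to the minimum-phase factor (the paper simply cites \cite{DOYFT90} for this), and then evaluate at $z$ using $L_h(z)=0$ so that $S(z)=1/(1+L_b(z))$. Your discussion of the arc/high-frequency contribution is more careful than the paper's own proof, which leaves that point entirely to the cited reference.
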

If $L$ has no RHP poles then 
\begin{equation*}
\begin{aligned}
\int_0^\infty \ln|S(i\omega)|w(z,\omega)d\omega =& -\pi\ln|1+L_b(z)|
\end{aligned}
\end{equation*}
\begin{proof}
We follow a modification of the proof in \cite{DOYFT90}. The sensitivity function $S$ can be factorized as $S=S_{ap}S_{mp}$ where $S_{mp}$ is a minimum phase transfer function and $S_{ap}$ is an all-pass transfer function, both unique up to sign \cite{DOYFT90}. We have \cite{DOYFT90}
\begin{equation*}
\begin{aligned}
\int_0^\infty \ln|S(i\omega)|w(z,\omega)d\omega =\pi\ln|S_{mp}(z)| \\
\end{aligned}
\end{equation*}
where the sensitivity function $S$ can be factorized as $S=S_{ap}S_{mp}$, $S_{mp}$ is a minimum phase transfer function and
\begin{equation*}
\begin{aligned}
S_{ap}=\prod_k\frac{s-p_k}{s+p_k}
\end{aligned}
\end{equation*}
is an all-pass transfer function, both unique up to sign \cite{DOYFT90}. For the case of unstable poles then
\begin{equation*}
\begin{aligned}
\ln|S_{mp}(z)|&=\ln|S_{ap}^{-1}(z)S(z)| =\ln\left|\prod_k \frac{p_k+z}{p_k-z}\right|+\ln\left|\frac{1}{1+L_b(z)}\right|
\end{aligned}
\end{equation*}
as $L_h(z)=0$. Thus
\begin{equation*}
\begin{aligned}
\int_0^\infty \ln|S(i\omega)|w(z,\omega)d\omega =\pi\ln\left|\prod_k \frac{p_k+z}{p_k-z}\right|-\pi\ln\left|1+L_b(z)\right| \\
\end{aligned}
\end{equation*}
For the case of no unstable poles then the result follows similarly using $S_{ap}(s)=1$.
\end{proof}
Theorem \ref{th:zero_integral} shows that buffering is effective at improving fundamental limits for feedback with open-loop unstable zeros if $\left|1+L_b(z)\right|$ is sufficiently large. For $z\to0$ and lossless buffering ($a_{xx}=0$) then $L_b(z)\to 0$ as $C_b(0)= 0$. Small $z$ thus requires either very large $\sigma_y/\sigma_x$ or $a_{xx}>0$ (where $C_b(0)\ne 0$) for effective regulation.

Theorem \ref{th:zero_integral} uses the unstable open-loop poles of $L$. However, we can further assume that all unstable poles in $L$ are poles in $L_h$. In this case, the contributions from the open-loop feedback $L_h$ and buffering $L_b$ separate on the RHS of \eqref{eq:zero_integral}. However, the effectiveness of the limit reduction from buffering depends upon the location of the feedback zero $z$.

Although Theorem \ref{th:zero_integral} is specifically for feedback and buffering, the approach can also be used for other cases with multiple loops where only one loop has an unstable zero.

\subsection{Constraints on the Peak of the Sensitivity Function}

We next study the constraints on the peak of the sensitivity function for the case where the feedback loop has a unstable zero, but the buffering loop has only stable zeros. 

\begin{theorem}\label{th:zero_peak}
Assuming proper, rational $C_h(s)$ and $w_p(s)$, stable $w_pS$ and closed-loop stability, then
\begin{equation}
\|w_p(s)S(s)\|_\infty\ge \left|\frac{w_p(z)}{1+L_b(z)}\right|\prod_k \left|\frac{p_k+z}{p_k-z}\right|
\end{equation}
where $L(s)= L_b(s)+L_h(s)$, $L_b(s)=G_yC_b \sigma_y\bar{y}$, $L_h(s)=GC_h$, $z$ represents the unstable zero of $L_h$, $p_k$ represents the unstable poles of $L$ and $z$ is distinct from all $p_k$. If there are no RHP poles then 
\begin{equation*}
\|w_p(s)S(s)\|_\infty\ge \left|\frac{w_p(z)}{1+L_b(z)}\right|
\end{equation*}
\end{theorem}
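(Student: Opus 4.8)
The plan is to adapt the classical maximum-modulus / interpolation argument (the same one underlying Theorem \ref{th:zero_integral}) to produce a lower bound on $\|w_pS\|_\infty$. First I would use the factorization $S=S_{ap}S_{mp}$ with $S_{ap}(s)=\prod_k\frac{s-p_k}{s+p_k}$ the Blaschke product over the unstable poles $p_k$ of $L$, and $S_{mp}$ minimum phase, so that $|S_{mp}(i\omega)|=|S(i\omega)|$ on the imaginary axis. Since $w_pS$ is assumed stable (analytic in the closed RHP) and $w_p$ is rational, the function $f(s)=w_p(s)S_{mp}(s)=w_p(s)S_{ap}^{-1}(s)S(s)$ is analytic in the open RHP and bounded on its boundary, so the maximum modulus principle gives $\|w_pS\|_\infty=\sup_\omega|w_p(i\omega)S(i\omega)|=\sup_\omega|f(i\omega)|\ge|f(z)|$ for the RHP zero $z$ of $L_h$.

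Next I would evaluate $f(z)$. At $s=z$ we have $L_h(z)=G(z)C_h(z)=0$, so $S(z)=\frac{1}{1+L(z)}=\frac{1}{1+L_b(z)}$; here I would invoke the stated well-posedness assumption (no unstable pole-zero cancellation between $C_h$ and $G_h$) so that $z$ is genuinely a zero of $L_h$ and not masked by a pole, and the hypothesis that $z$ is distinct from every $p_k$ so $S_{ap}(z)$ is finite and nonzero. Evaluating the Blaschke factor gives $|S_{ap}^{-1}(z)|=\prod_k\left|\frac{z+p_k}{z-p_k}\right|$. Combining,
\begin{equation*}
\|w_pS\|_\infty\ge|f(z)|=|w_p(z)|\,\left|S_{ap}^{-1}(z)\right|\,|S(z)|=\left|\frac{w_p(z)}{1+L_b(z)}\right|\prod_k\left|\frac{p_k+z}{p_k-z}\right|,
\end{equation*}
which is exactly the claimed bound. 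For the no-RHP-poles case, $S_{ap}\equiv1$ and the product drops out, giving $\|w_pS\|_\infty\ge\left|w_p(z)/(1+L_b(z))\right|$.

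The routine checks are that $S_{mp}$ is indeed analytic and of bounded type in the closed RHP (so that the maximum modulus / $H^\infty$ boundary argument applies) — this uses closed-loop stability, which guarantees $S$ is stable, together with properness of $C_h$ and $w_p$ — and that multiplying by the finite Blaschke product $S_{ap}^{-1}$ does not introduce RHP poles of $f$, which holds precisely because $z\ne p_k$ for all $k$. The main obstacle, and the only step requiring genuine care, is the bookkeeping at $s=z$: one must be sure that the only contribution of $L$ surviving at the zero of $L_h$ is $L_b(z)$, which relies on the structural decomposition $L=L_b+L_h$ from \eqref{eq:ltf} and on $z$ being a zero of $L_h$ in the well-posed sense stipulated before the theorem. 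Everything else is a direct transcription of the Doyle–Francis–Tannenbaum Poisson-integral/interpolation machinery cited for Theorems \ref{th:zero_integral} and the classical $S(z)=1/(1+L_b(z))$ identity.
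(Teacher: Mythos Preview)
Your proposal is correct and follows essentially the same route as the paper: factor $S=S_{ap}S_{mp}$ with $S_{ap}$ the Blaschke product over the unstable poles $p_k$, use $|S_{ap}(i\omega)|=1$ to write $\|w_pS\|_\infty=\|w_pS_{mp}\|_\infty$, apply the maximum modulus principle at the RHP point $z$, and exploit $L_h(z)=0$ to get $S(z)=1/(1+L_b(z))$. The paper's proof is terser but structurally identical, and your additional bookkeeping remarks (well-posedness at $z$, distinctness of $z$ from the $p_k$, analyticity of $w_pS_{mp}$) are exactly the routine checks the paper leaves implicit.
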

\begin{proof}
We follow a modification of the proof in \cite{DOYFT90}. If there are no RHP poles then from the maximum modulus principle \cite{DOYFT90} and $L_h(z)=0$, we have
\begin{equation*}
\begin{aligned}
\left|\frac{w_p(z)}{1+L_b(z)}\right|&=\left|w_p(z)S(z)\right|\le \sup_{Re s\ge 0} w_p(s)S(s)=\|w_p(s)S(s)\|_\infty\\
\end{aligned}
\end{equation*}
If there are RHP poles then using factorization $S=S_{ap}S_{mp}$ (see Proof of Theorem \ref{th:zero_integral}), we have
\begin{equation*}
\begin{aligned}
\|w_p(s)S(s)\|_\infty&=\|w_p(s)S_{mp}(s)\|_\infty\\
&\ge\left| w_p(z)S_{mp}(z)\right|\\
&=\left|\frac{w_p(z)}{1+L_b(z)}\right|\prod_k \left|\frac{p_k+z}{p_k-z}\right|
\end{aligned}
\end{equation*}
\end{proof}
As above, Theorem \ref{th:zero_integral} shows that buffering is effective at improving fundamental limits of feedback with open-loop unstable zeros if $\left|1+L_b(z)\right|$ is sufficiently large.

\section{Example: Glycolysis}\label{sect:exam}

In this section, we apply our framework to the analysis of glycolytic regulation with creatine phosphate buffering. Glycolysis is an interesting example as it is both core to energy metabolism \cite{BERTS07} and the autocatalytic nature of its biochemical reactions results in the primary feedback mechanism having unstable open-loop poles and zeros \cite{CHABD11}. Here, we show the effect of creatine phosphate on reducing the fundamental limits for glycolytic regulation.

\subsection{Model of Glycolysis with ATP Buffering}

We use a model of glycolysis based on that proposed in \cite{CHABD11}, with the addition of buffering \cite{HANAPS17}. Consider 
\begin{equation*}
\begin{aligned}
\dot{y}=&-q\underbrace{f(y,u_h)}_{\text{PFK}}+(q+1)\underbrace{w(y)z}_{\text{PK}}
-\underbrace{g_y(y,x)+g_x(x,y)}_{\text{Buffering}}-\underbrace{V_y (1+d_y)}_{\text{ATP Demand}}\\
\dot{z}=&\underbrace{f(y,u_h)}_{\text{PFK}}-\underbrace{w(y)z}_{\text{PK}}\\
\dot{x}=&\underbrace{g_y(y,x)-g_x(x,y)}_{\text{Buffering}}
\end{aligned}
\end{equation*}
where $y$ represents ATP concentration, $z$ is a lumped variable representing intermediate metabolites, and $x$ represents the pCr (creatine phosphate) concentration. $g_y(y,x)$ and $g_x(x,y)$ are the forward and reverse ATP$~\leftrightarrow$~pCr reactions respectively, $f$ represents the flux through phosphofructokinase (PFK), $w(y)z$ represents the flux through pyruvate kinase (PK), $q$ represents the stoichiometric "investment-return" ratio associated with autocatalysis and $V_y$ represents the nominal ATP demand.

The nominal steady state occurs when
\begin{equation*}
\begin{aligned}
&f(\bar y,\bar{u}_h)=\bar{w}\bar z=V_y\\
&g_y(\bar y,\bar x)=g_x(\bar x,\bar y)+r_x(\bar x)
\end{aligned}
\end{equation*}
where $\bar{w}=w(\bar{y})$.

 The linearization of the three-state model is
\begin{equation*}
\begin{aligned}
\Delta\dot{y}=&((q+1)\alpha_w\bar z-q\alpha_f)\Delta y+(q+1)\bar{w}\Delta z\\
&-q\beta_f \Delta u_h-\sigma_y\Delta y+\sigma_x\Delta x-V_y d_y\\
\Delta\dot{z}=&(\alpha_f-\alpha_w\bar z)\Delta y-\bar{w}\Delta z+\beta_f \Delta u_h\\
\Delta\dot{x}=&\sigma_y\Delta y-\sigma_x\Delta x
\end{aligned}
\end{equation*}
where
\begin{equation*}
\begin{aligned}
\sigma_y&=\frac{\partial}{\partial y}[g_y-g_x], &\sigma_x&=\frac{\partial}{\partial x}[g_x-g_y], &\beta_f=\frac{\partial f}{\partial u_h}\\
\alpha_f&=\frac{\partial f}{\partial y},&\alpha_w&=\frac{\partial w}{\partial y}\\
\end{aligned}
\end{equation*}
Matching with \eqref{eq:min_mod_lin}, we have
\begin{equation*}
\begin{aligned}
A_{yy}&=(q+1)\alpha_w\bar z-q\alpha_f,& A_{yz}&=(q+1)\bar{w},&B_{yh}&=-q\beta_f,\\
A_{zy}&=\alpha_f-\alpha_w\bar z,& A_{zz}&=-\bar{w},&B_{zh}&=\beta_f,\\
a_{xx}&=0
\end{aligned}
\end{equation*}

\subsection{Transfer Functions}
Scaling the feedback input by setting $\hat p=f(\bar y,\bar{u}_h)=V_y$, we have the transfer functions
\begin{equation*}
\begin{aligned}
G_y&=\frac{1}{\bar{y}}\frac{s+\bar{w}}{s^2+(q\alpha_f-(q+1)\alpha_w\bar{z}+\bar{w})s-\bar{w}\alpha_f}\\
G_h&=-V_y\beta_f\frac{qs-\bar{w}}{s+\bar{w}},\quad \hat G_z=\frac{(q+1)\bar{w}}{s+\bar{w}},\quad \hat d_y=-V_y\\
\end{aligned}
\end{equation*}
Thus we have
\begin{equation*}
\begin{aligned}
L_b&=\frac{s+\bar{w}}{s^2+(q\alpha_f-(q+1)\alpha_w\bar{z}+\bar{w})s-\bar{w}\alpha_f}\frac{\sigma_ys}{s+\sigma_x}\\
L_h&=-\frac{V_y}{\bar{y}}\beta_fq\frac{s-\frac{\bar{w}}{q}}{s^2+(q\alpha_f-(q+1)\alpha_w\bar{z}+\bar{w})s-\bar{w}\alpha_f}C_h\\
\end{aligned}
\end{equation*}
We can observe that $G_y$ and $G_yG_h$ have the same poles, while the zero of $G_y$ is in the LHP and the zero of $G_yG_h$ is in the RHP. Depending upon the parameters, the open-loop feedback poles can be in the RHP or LHP \cite{CHABD11}.

\subsection{Fundamental Constraints}

The unstable zero in $L_h$ introduces a constraint on feedback regulation. However, we can use Theorem \ref{th:zero_integral} and \ref{th:zero_peak} to show that the buffering can reduce this limit if $|1+L_b(z)|$ is sufficiently large. The unstable zero of $L_h$ is at $z=\bar{w}/q$, and so we have
\begin{equation*}
L_b(z)=\frac{\sigma_y\bar{w}}{\bar{w}+\sigma_x q} \frac{q(q+1)}{\bar{w}+(q\alpha_f-(q+1)\alpha_w\bar{z}+\bar{w})q-\alpha_fq^2}
\end{equation*}
We can observe that for $\sigma_x\gg\bar{w}/q$, where the buffer is faster than the dynamics of the unstable zero then
\begin{equation*}
C_b(z)=\frac{\sigma_y\bar{w}}{\bar{w}+\sigma_x q}\approx\frac{\sigma_y}{\sigma_x}\frac{\bar{w}}{q}
\end{equation*}
and so the regulatory limits in Theorem \ref{th:zero_integral} and \ref{th:zero_peak} can be reduced by increasing the buffer equilibrium ratio $\sigma_y/\sigma_x$ \cite{HANAPS17}, where
\begin{equation*}
\left|\frac{1}{1+L_b}\right|\approx\left|\frac{1}{1+\frac{\sigma_y}{\sigma_x }\frac{\bar{w}}{q}G_y(z)}\right|
\end{equation*}
If the $z=\bar{w}/q$ is small then $\sigma_y/\sigma_x$ is required to be very large for the buffer to be effective.

For $\sigma_x\ll\bar{w}/q$, where the time to buffering equilibrium is slower than the dynamics of the unstable zero, then
\begin{equation*}
C_b(z)=\frac{\sigma_y\bar{w}}{\bar{w}+\sigma_xq}\approx\sigma_y
\end{equation*}
and so the regulatory limits in Theorem \ref{th:zero_integral} and \ref{th:zero_peak} can be reduced by increasing $\sigma_y$, where
\begin{equation*}
\left|\frac{1}{1+L_b}\right|\approx\left|\frac{1}{1+\sigma_yG_y(z)}\right|
\end{equation*}

\begin{figure}
\centering
    \includegraphics[width=\columnwidth]{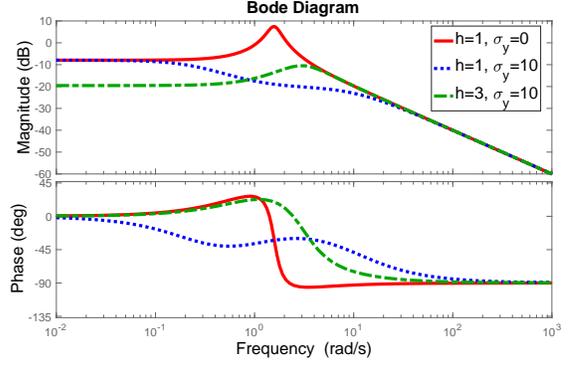}
\caption{Bode Plot of Glycolysis for $D_y$ to $Y$. The parameters are $w=1$, $q=1$, $ \alpha_w=-1$, $z=1$, $\alpha_f=1$, $\sigma_x=1$, $\beta_f=3.5$ and proportional feedback $C_h=h$.}
\label{fig:Bode}
\end{figure}

\begin{figure}
\centering
    \includegraphics[width=\columnwidth]{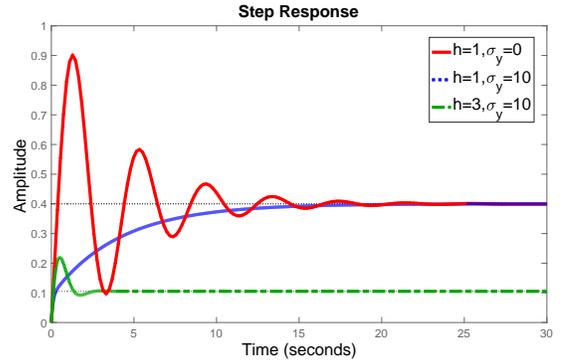}
\caption{Simulation of Glycolysis with  $w=1, q=1,\alpha_w=-1,z=1,\alpha_f=1,\sigma_x=1, \beta_f=3.5, \bar{y}=1$, proportional feedback $C_h=h$, and a unit step disturbance at $d_y$.}
\label{fig:Sim}
\end{figure}

Figures \ref{fig:Bode} and \ref{fig:Sim} shows the effect of increasing the buffering via an increase in the forward buffering kinetic constant $\sigma_y$ and proportional feedback $C_h=h$. Buffering can be seen to attenuate the oscillations from feedback, and also allow a stronger feedback gain without instability.

\section{Conclusion}

In this paper, we introduced a frequency-domain framework for analyzing buffer-feedback systems in cellular regulation. We determined standard closed-loop transfer function, loop-shaping properties, and the effect of buffering on fundamental limits. We then applied the methodology to the analysis of glycolysis. The work is likely to have a wide range of applications for biological regulation as well as autocatalytic networks more generally.

\appendices

\section{Transfer Functions}

\subsection{Plant Transfer Function}

The frequency-domain representation for \eqref{eq:min_mod_ol} without buffering or feedback is
\begin{equation*}
\begin{aligned}
\bar{y}sY(s)&=\bar{y}A_{yy}Y(s)+A_{yz}Z(s)+B_{yh}\hat{p}U_h(s)+\hat{g}U_b(s)+\hat d_yD_y(s)\\
sZ(s)&=\bar{y}A_{zy}Y(s)+A_{zz}Z(s)+B_{zh}\hat{p}U_h(s)+\hat d_zD_z(s)\\
\end{aligned}
\end{equation*}
which can be rearranged as
\begin{equation*}
\begin{aligned}
(s-A_{yy})\bar{y}Y&=A_{yz}Z+B_{yh}\hat{p}U_h+\hat{g}U_b+\hat d_yD_y\\
(sI_{n-1}-A_{zz})Z&=A_{zy}\bar{y}Y+B_{zh}\hat{p}U_h+\hat d_zD_z\\
\end{aligned}
\end{equation*}
giving
\begin{equation*}
\begin{aligned}
A_{yz}Z&=\hat G_zA_{zy}\bar{y}Y+\hat G_zB_{zh}\hat{p}U_h+\hat G_z\hat d_zD_z\\
\hat G_z &= A_{yz}(sI_{n-1}-A_{zz})^{-1}\\
\end{aligned}
\end{equation*}
Substituting, we obtain
\begin{equation*}
\begin{aligned}
(s-A_{yy}-\hat G_zA_{zy})\bar{y}Y=&(B_{yh}+\hat G_zB_{zh})\hat{p}U_h\\
&+\hat{g}U_b+\hat d_yD_y+\hat G_z\hat d_zD_z\\
\end{aligned}
\end{equation*}
Thus, we can write 
\begin{equation*}
\begin{aligned}
\bar{y}Y&=GU_h+\hat G_y\hat{g}U_b+\hat G_y\hat d_yD_y+\hat G_y\hat G_z\hat d_zD_z\\
\hat G_h&=B_{yh}+\hat G_zB_{zh},\quad \hat G_y =(s-A_{yy}-\hat G_zA_{zy})^{-1}\\
\end{aligned}
\end{equation*}
Setting
\begin{equation*}
\begin{aligned}
&\hat d_{yz}=\text{Diag}({\hat{d}_y,\hat{d}_z}),\quad D_{yz}(s)=\left(D_y,\, D_z\right)^T\\
&\hat G_d(s)=[1,\, \hat G_z],\quad \hat G_h=(B_{yh}+\hat G_zB_{zh})\\
\end{aligned}
\end{equation*}
then
\begin{equation}\label{eq:ol_app}
\begin{aligned}
Y(s)=&\frac{\hat{p}}{\bar{y}}\hat G_y\hat G_hU_h+\frac{\hat{g}}{\bar{y}}\hat G_yU_b+\frac{\hat d_{yz}}{\bar{y}}\hat G_y\hat G_dD_{yz}\\
\end{aligned}
\end{equation}

\subsection{Closed Loop}

We use the open-loop transfer function to determine the buffered, closed-loop transfer functions, i.e., feedback and buffering in \eqref{eq:ol_tf}. Comparing 
\eqref{eq:min_mod_ol} and \eqref{eq:min_mod_lin}, we have
\begin{equation}\label{eq:u_b}
\begin{aligned}
u_b=-\sigma_yy+\sigma_xx-d_b
\end{aligned}
\end{equation}
Transforming \eqref{eq:min_mod_lin} and \eqref{eq:u_b} results in
\begin{equation}\label{eq:tf_work}
\begin{aligned}
\hat{g}U_b=&-\sigma_y\bar{y}Y+\sigma_x\bar{y}X-\hat d_bD_b\\
s\bar{y}X=&\sigma_y\bar{y}Y-\sigma_x\bar{y}X-a_{xx}\bar{y}X+\hat d_x D_x+\hat d_bD_b\\
\end{aligned}
\end{equation}
where $X(s)=\mathcal{L}\{\Delta x/\bar{y}\}$.
Rearranging the second equation in \eqref{eq:tf_work}, and multiplying by $\sigma_x$, we have
\begin{equation*}
\begin{aligned}
\sigma_x \bar{y}X&=\frac{\sigma_x}{s+\sigma_x+a_{xx}}\sigma_y\bar{y}Y+\frac{\sigma_x}{s+\sigma_x+a_{xx}}(\hat d_xD_x+\hat d_bD_b)\\
&=(1-\hat{C}_b)\sigma_y\bar{y}Y+(1-\hat{C}_b)(\hat d_xD_x+\hat d_bD_b)\\
\end{aligned}
\end{equation*}
where
\begin{equation*}
\begin{aligned}
\hat{C}_b&=\frac{s+a_{xx}}{s+\sigma_x+a_{xx}},\quad 1-\hat{C}_b=\frac{\sigma_x}{s+\sigma_x+a_{xx}}
\end{aligned}
\end{equation*}
Thus 
\begin{equation*}
\begin{aligned}
\hat{g}U_b=&-\hat{C}_b\sigma_y\bar{y}Y+(1-\hat{C}_b)\hat d_xD_x -\hat{C}_b\hat d_bD_b\\
\end{aligned}
\end{equation*}
Substituting into \eqref{eq:ol_app}, we have
\begin{equation*}
\begin{aligned}
\bar{y}Y=&\hat G_y\hat G_h\hat{p}U_h-\hat G_y\hat{C}_b\sigma_y \bar{y}Y+\hat G_y(1-\hat{C}_b)\hat d_xD_x\\
&+\hat G_y\hat G_d\hat d_{yz}D_{yz}-\hat G_y\hat{C}_b\hat d_bD_b\\
\end{aligned}
\end{equation*}
Setting $U_h=-C_hY$ and $\hat{C}_h=C_h\hat{p}/\bar{y}$, we have the closed loop
\begin{equation*}
\begin{aligned}
&(1+\hat G_y\hat C_b\sigma_y+\hat G_y\hat G_h\hat{C}_h)\bar{y}Y\\
&=\hat G_y\hat G_d\hat d_{yz}D_{yz}+\hat G_y(1-\hat{C}_b)\hat d_xD_x-\hat G_y\hat C_b\hat d_bD_b\\
\end{aligned}
\end{equation*}
Thus
\begin{equation*}
\begin{aligned}
Y=&\frac{\hat G_y\hat G_d}{1+\hat G_y\hat C_b\sigma_y+\hat G_y\hat G_h\hat C_h}\frac{\hat d_{yz}}{\bar{y}}D_{yz}\\
&+\frac{\hat G_y(1-\hat C_b)}{1+\hat G_y\hat C_b\sigma_y+\hat G_y\hat G_h\hat C_h}\frac{\hat d_x}{\bar{y}}D_x\\
&-\frac{\hat G_y\hat C_b}{1+\hat G_y\hat C_b\sigma_y+\hat G_y\hat G_h\hat C_h}\frac{\hat d_b}{\bar{y}}D_b
\end{aligned}
\end{equation*}

\section*{ACKNOWLEDGMENTS}

Edward Hancock gratefully acknowledges the donation from Judith and David Coffey. 

\bibliographystyle{plain}
\bibliography{Bio_Bibliography}

\end{document}